\newcommand{\bbZ}{\mathbb{Z}}
\newcommand{\bbR}{\mathbb{R}}
\newcommand{\bbC}{\mathbb{C}}
\newcommand{\cross}{\times}
\newcommand{\xrightarrowt}[2]{\xrightarrow[\text{#2}]{\text{#1}}}
\theoremstyle{plain}
\newtheorem{theorem}{Theorem}[section]
\newtheorem{lemma}[theorem]{Lemma}
\newtheorem*{lemma*}{Lemma}
\newtheorem{corollary}[theorem]{Corollary}
\newtheorem{conjecture}[theorem]{Conjecture}
\newtheorem{question}[theorem]{Question}
\theoremstyle{definition}
\newtheorem{definition}[theorem]{Definition}
\theoremstyle{remark}
\newtheorem{example}[theorem]{Example}
\numberwithin{equation}{section}
\newcommand{\allbf}[1]{\textbf{\boldmath{#1}}}
\renewcommand{\r}{r}
\newcommand{\tb}{\operatorname{tb}}
\newcommand{\tw}{\operatorname{tw}}
\newcommand{\Wh}{W}
\newcommand{\sat}[3]{\ensuremath{S_{#1}(#2,#3)}}
\newcommand{\lsat}[2]{\ensuremath{\Sigma(#1,#2)}}
\title{Legendrian Satellites and Decomposable Concordances}
\author[Y. Liu]{Yanhan Liu}\address{Haverford College, Haverford, PA 19041} \email{yliu9@haverford.edu}
\author[J. Sabloff]{Joshua M. Sabloff} \address{Haverford College,
Haverford, PA 19041} \email{jsabloff@haverford.edu} \thanks{YL, JMS, and SZ were
partially supported by NSF grant DMS-1406093 in the preparation of this paper. MY was partially supported by a grant from Haverford College.}
\author[M. Yacavone]{Matthew Yacavone} \address{Haverford College, Haverford, PA 19041} \email{myacavone@haverford.edu}
\author[S. Zhou]{Sipeng Zhou} \address{Haverford College, Haverford, PA 19041} \email{szhou1@haverford.edu}
\date{\today}
\begin{document}

\begin{abstract}
    We investigate the ramifications of the Legendrian satellite construction on the relation of Lagrangian cobordism between Legendrian knots.  Under a simple hypothesis, we construct a Lagrangian concordance between two Legendrian satellites by stacking up a sequence of elementary cobordisms. This construction  narrows the search for ``non-decomposable'' Lagrangian cobordisms and yields new families of decomposable Lagrangian slice knots. Finally, we show that the maximum Thurston-Bennequin number of a smoothly slice knot provides an obstruction to any Legendrian satellite of that knot being Lagrangian slice.
\end{abstract}

\maketitle

\section{Introduction}\label{introduction}

Lagrangian cobordisms between Legendrian submanifolds --- especially exact cobordisms that lie in symplectizations --- have seen increasing attention in recent research.  Lagrangian cobordisms  are of interest not only as central objects in the relative Symplectic Field Theory \cite{c-dr-g-g, c-dr-g-g-cobordism, rizell:surgery, rizell:lifting, ekholm:rsft, ekholm:rsft-survey, ekholm:lagr-cob, ehk:leg-knot-lagr-cob, golovko:tb} and generating family \cite{bst:construct,josh-lisa:obstr} frameworks, but also because of their connections to the topology of smooth knots \cite{bty,polyfillability,chantraine, positivity}.  Subtle questions  have arisen about the structure of the relation that Lagrangian cobordism induces on the set of Legendrian submanifolds. The goal of this paper is to use the Legendrian satellite construction to illuminate both questions about the Lagrangian cobordism relation and questions of when a smoothly slice knot has a Legendrian representative that is Lagrangian slice.

\subsection{The Lagrangian Cobordism Relation}

We begin by discussing the Lagrangian cobordism relation.  In contrast to the smooth setting, where cobordism --- and even concordance --- between knots induces an equivalence relation, Lagrangian cobordism between Legendrians is not a symmetric relation  \cite{bs:monopole,chantraine:non-symm, cns:obstr-concordance}, though it is not clear whether Lagrangian concordance induces a partial order.  Further, to achieve transitivity, one must impose the condition that, outside a compact set,  Lagrangian cobordisms are equal to cylinders over Legendrians (see Section~\ref{cobordismsBkg} for the precise definition). This cylindrical-at-infinity condition turns out to be rather delicate: not all Lagrangian surfaces in $\bbC^2$ that meet the standard contact $3$-sphere in a Legendrian knot, for example, can be deformed to be cylindrical-at-infinity \cite{chantraine:collar}, though the known examples are not exact. Pushing this line of thought further, consider that in the smooth category, a generic cobordism may always be decomposed into a sequence of elementary cobordisms arising from smooth isotopy and from attaching a single handle.  It is unclear whether cylindrical-at-infinity Lagrangian cobordisms share this feature.  On one hand, there are constructions of elementary Lagrangian cobordisms \cite{bst:construct,rizell:surgery, ehk:leg-knot-lagr-cob} (see Theorem~\ref{thm:construct}, below);  we say that a Lagrangian cobordism constructed by stacking  elementary cobordisms is \allbf{decomposable}. On the other, Chantraine's example noted above hints that it might not be the case that all Lagrangian cobordisms are decomposable.

One possible source of indecomposable Lagrangian cobordisms arises from the Legendrian satellite construction.  We briefly set notation:  given a Legendrian knot $\Lambda$ in the standard contact $\bbR^3$ and a Legendrian link $\Pi$ in the solid torus $J^1S^1$, we denote the Legendrian satellite of $\Lambda$ by $\Pi$ by $\lsat{\Lambda}{\Pi}$. Given a Lagrangian concordance, one can find, via a geometric construction in \cite[Theorem 2.4]{cns:obstr-concordance}, a Lagrangian concordance between Legendrian satellites of the knots at the ends. It is not at all clear that the results of this construction are decomposable.  In fact,  Cornwell, Ng, and Sivek \cite[Conjecture 3.3]{cns:obstr-concordance} put forward a candidate for an indecomposable Lagrangian concordance between Whitehead doubles of concordant Legendrians.  One might ask how careful they had to be in their construction; that is:

\begin{question} \label{qu:decomposable}
    Under what conditions does there exist a decomposable Lagrangian concordance between Legendrian satellites?
\end{question}

The main result of the paper supplies one such condition.  To state the result, we consider the Legendrian solid torus link $\tw_m$, an example of which is depicted in Figure~\ref{fig:wh-tw}(b).  The sum of two Legendrian solid torus links comes from simply concatenating front diagrams; see the end of Section~\ref{solid-torusBkg}.

\begin{theorem}\label{mainThrm}
    If $\Pi$ is Legendrian solid torus link  with $m$ strands and the Legendrian knots $\Lambda_-$ and $\Lambda_+$ are decomposably concordant, then $\lsat{\Lambda_-}{\Pi +  \tw_m}$ and $\lsat{\Lambda_-}{\Pi + \tw_m}$ are also decomposably concordant.
\end{theorem}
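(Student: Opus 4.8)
The plan is to take a decomposition of the concordance from $\Lambda_-$ to $\Lambda_+$ and lift it, one elementary cobordism at a time, through the satellite construction with companion pattern $\Pi+\tw_m$. Write the concordance as a stack of elementary cobordisms, so that there is a chain of Legendrian links
\[
\Lambda_- = L_0,\ L_1,\ \dots,\ L_N = \Lambda_+,
\]
in which each $L_i$ arises from $L_{i-1}$ by a Legendrian isotopy, a pinch (saddle) move, or the birth of a standard $\tb=-1$ unknot $U_0$ disjoint from $L_{i-1}$ --- the three moves furnished by Theorem~\ref{thm:construct} --- and extend $\lsat{\cdot}{\Pi+\tw_m}$ to the (possibly disconnected) $L_i$ by inserting a copy of the pattern around each component. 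It suffices to produce, for each $i$, a \emph{decomposable} Lagrangian cobordism from $\lsat{L_{i-1}}{\Pi+\tw_m}$ to $\lsat{L_i}{\Pi+\tw_m}$ and to stack them; the stacked cobordism is then a concordance once one checks that its Euler characteristic and its number of boundary components at each end match those of a disjoint union of annuli. The key point is that the only elementary move creating a new component is a birth, and it always creates the \emph{same} Legendrian $U_0$, so the only genuinely new object the construction must build from nothing is $\lsat{U_0}{\Pi+\tw_m}$.

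Two of the three move types are routine. Because the satellite construction is natural under Legendrian isotopy of the companion, an isotopy step lifts to a Legendrian isotopy of satellites, which is itself an elementary cobordism. A pinch on the companion occurs in a ball whose front is a pair of parallel strands; taking the $m$-copy replaces that ball with one containing $2m$ parallel strands, and the $m$-copy of a single saddle cobordism is realized by a sequence of $m$ saddle moves together with front moves (Legendrian isotopies) that put the strands into standard position. This is the ``$m$-copy of an elementary cobordism'' manipulation behind the usual results on $m$-copies, and both of these lifts are visibly decomposable.

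The birth move is the crux, and this is exactly where the hypothesis uses $\Pi+\tw_m$ in place of $\Pi$. When the companion births $U_0$, the satellite must acquire a disjoint copy of $\lsat{U_0}{\Pi+\tw_m}$, and for the stack to stay decomposable that copy must be produced from below by births and saddles; so the theorem reduces to the following key lemma: \emph{$\lsat{U_0}{\Pi+\tw_m}$ admits a decomposable Lagrangian filling}, of the Euler characteristic forced by the later saddles that merge this component into the main one. I would establish this by an explicit front-diagram argument. The satellite $\lsat{U_0}{\Pi+\tw_m}$ is the standard closure of the front of $\Pi+\tw_m$, and the $\tw_m$ summand is engineered so that the twist region it inserts supplies enough room for a sequence of saddle and birth moves that peels the strands of this closure apart and disentangles them from the tangle $\Pi$, reducing it to a disjoint union of standard $\tb=-1$ unknots; reading this sequence from the standard Lagrangian disks upward exhibits the filling. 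Carrying this out for an \emph{arbitrary} solid-torus pattern $\Pi$, and verifying that the room provided by $\tw_m$ always suffices, is the main obstacle --- and is the whole reason $\tw_m$ appears in the statement. With the lemma in hand, one stacks the lifted cobordisms in order: an isotopy contributes $0$ to the Euler characteristic, the $m$-copy of a pinch contributes $-m$, and a birth together with the filling of its satellite contributes precisely the count needed to cancel the merging saddles, so the total is a decomposable concordance from $\lsat{\Lambda_-}{\Pi+\tw_m}$ to $\lsat{\Lambda_+}{\Pi+\tw_m}$.
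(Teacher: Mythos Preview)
Your overall strategy---lift each elementary cobordism through the satellite---is right, but the choice to insert the full pattern $\Pi+\tw_m$ around \emph{every} component of the intermediate links $L_i$ breaks the argument in two places.  First, your ``key lemma'' that $\lsat{U_0}{\Pi+\tw_m}$ admits a decomposable Lagrangian filling is simply false for arbitrary $\Pi$: take $m=1$ (so $\tw_1$ is trivial) and let $\Pi$ be a one-strand pattern containing a zigzag; then $\lsat{U_0}{\Pi+\tw_1}$ is a stabilized unknot with $\tb<-1$, hence has no exact orientable Lagrangian filling at all by Equation~(\ref{eq:filling-g4}).  Second, your treatment of a pinch does not close up: if each of the two merging components carries $\Pi+\tw_m$, then after your $m$ saddles the merged component carries (up to isotopy) the pattern $\Pi+\tw_m+\Pi+\tw_m$, not $\Pi+\tw_m$, and there is no elementary-cobordism mechanism for deleting the extra copy of $\Pi$.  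So neither the birth step nor the saddle step goes through as stated.

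The paper's fix is to put $\Pi+\tw_m$ only on a single distinguished component---the one containing the original knot $\Lambda_-$, which by Lemma~\ref{1hcomponent} persists through every stage of the concordance---and to put just $\tw_m$ on every birthed component.  Then a $0$-handle only needs to create $\lsat{\Upsilon}{\tw_m}$, which is isotopic to $m$ disjoint maximal unknots; and a $1$-handle always merges two components carrying twist patterns only near the saddle region, handled explicitly by Lemma~\ref{lem:1-handle-sat}.  In particular, the genuine role of the added $\tw_m$ is to make the \emph{$1$-handle} lift work (the inductive front manipulation in that lemma consumes the two full twists and reassembles a single one), not to make the $0$-handle lift work as you conjectured.
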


With this result in hand, we sharpen Question~\ref{qu:decomposable}, above:

\begin{question}\label{mainQuestion}
If the Legendrian knots $\Lambda_-$ and $\Lambda_+$ are decomposably concordant and $\lsat{\Lambda_-}{\Pi}$ and $\lsat{\Lambda_+}{\Pi}$ are decomposably concordant, must the Legendrian solid torus link $\Pi$ be isotopic to $\Pi' + \tw_m$ for some $\Pi'$?
\end{question}

A proof of the affirmative would imply Conjecture 3.3 in \cite{cns:obstr-concordance}.

\subsection{Lagrangian Slice Knots}

Shifting our focus slightly to interactions between smooth and Lagrangian cobordism, recall that satellite constructions have a long history as tools for investigating slice knots and smooth concordance; see, for example, the  recent papers \cite{cdr:inj-sat, chl:fractal-concordance,hk:wh}. We use Legendrian satellites to investigate the following question \cite{cns:obstr-concordance}:

\begin{question}
When does a smoothly slice knot have a Legendrian representative with Lagrangian slice disk?
\end{question}

It is straightforward to produce new Lagrangian slice knots from known ones using \cite[Theorem 2.4]{cns:obstr-concordance}.  To select an appropriate pattern $\Pi \subset J^1S^1$, let $\Upsilon$ denote the maximal Legendrian unknot, and suppose that the solid torus knot $\Pi$ has the property that $\lsat{\Upsilon}{\Pi}$ is Lagrangian slice. For example, if $\Wh$ is the Legendrian Whitehead double pattern (see Figure~\ref{fig:wh-tw}(a)), then $\Pi = \Wh + \tw_2$ has the property that $\lsat{\Upsilon}{\Pi}$ is isotopic to $\Upsilon$ and hence is Lagrangian slice. If there is a concordance from $\Upsilon$ to a Legendrian knot $\Lambda$, then the concordance constructed from \cite[Theorem 2.4]{cns:obstr-concordance} stacked on top of the Lagrangian slice disk for $\lsat{\Upsilon}{\Pi}$ shows that $\lsat{\Lambda}{\Pi}$ is Lagrangian slice.  Concretely, this yields many examples of Lagrangian slice knots, such as those exhibited in \cite[\S4]{cns:obstr-concordance} or iterations of the Whitehead double construction applied to any of these knots. Note that the condition on $\Lambda$ is somewhat stronger than $\Lambda$ merely being Lagrangian slice: if $\Lambda$ is Lagrangian slice, there might not be a way to decompose the Lagrangian slice disk into a concordance to $\Upsilon$ stacked on top of a $0$-handle. 

We can use Theorem~\ref{mainThrm} to produce new decomposable Lagrangian slice knots:

\begin{corollary}\label{sliceCorr}
    If $\Lambda$ is a decomposable Lagrangian slice knot and $\Pi$ is a Legendrian solid torus knot such that $\lsat{\Upsilon}{\Pi+\tw_m}$ is decomposable Lagrangian slice, then $\lsat{\Lambda}{\Pi+\tw_m}$ is decomposable Lagrangian slice.
\end{corollary}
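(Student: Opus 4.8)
The plan is to deduce this from Theorem~\ref{mainThrm}, with the only extra ingredient being that a decomposable Lagrangian slice knot is decomposably concordant to $\Upsilon$. To set this up, I would take a decomposable Lagrangian filling $F$ of $\Lambda$ that is a disk and examine the bottom of the stack of elementary cobordisms building $F$. Since the elementary pieces are Legendrian isotopies, $0$-handle cobordisms attaching a maximal Legendrian unknot, and saddle ($1$-handle) cobordisms --- and a saddle requires at least one component already present --- the first nontrivial elementary piece must be a $0$-handle, and it produces a copy of $\Upsilon$. I would then argue that the part of $F$ above this first minimum is an annulus: the part of $F$ at and below the minimum is a disk meeting the rest of $F$ in the single circle $\Upsilon$, so additivity of the Euler characteristic forces the upper part to have $\chi = \chi(F) - 1 = 0$; cutting the disk $F$ along the circle $\Upsilon$ (which bounds the $0$-handle disk) leaves a connected, orientable surface with boundary $\Upsilon \sqcup \Lambda$, hence an annulus. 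Thus the stack of elementary cobordisms above the first minimum is a decomposable cobordism from $\Upsilon$ to $\Lambda$ that is topologically a concordance, i.e. $\Upsilon$ and $\Lambda$ are decomposably concordant.

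With that in hand the rest is quick. Since $\Pi$ is a Legendrian solid torus knot with $m$ strands (so that $\Pi + \tw_m$ is the relevant pattern), I would apply Theorem~\ref{mainThrm} to $\Pi$ and to the decomposably concordant pair $\Lambda_- = \Upsilon$, $\Lambda_+ = \Lambda$ to obtain a decomposable concordance from $\lsat{\Upsilon}{\Pi + \tw_m}$ to $\lsat{\Lambda}{\Pi + \tw_m}$. By hypothesis $\lsat{\Upsilon}{\Pi + \tw_m}$ bounds a decomposable Lagrangian disk $G$. Stacking the decomposable concordance on top of $G$ and concatenating the two decompositions yields a decomposable Lagrangian cobordism from the empty Legendrian to $\lsat{\Lambda}{\Pi + \tw_m}$; since it is a disk glued to an annulus along a circle, it is a disk, and so $\lsat{\Lambda}{\Pi + \tw_m}$ is decomposable Lagrangian slice.

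The only real content beyond invoking Theorem~\ref{mainThrm} lives in the first paragraph, so I expect the main (and mild) obstacle to be cleanly justifying that the portion of a decomposable slice disk above its first minimum is a bona fide decomposable concordance from $\Upsilon$ in the sense of Theorem~\ref{thm:construct}: one must note that intermediate slices of this cobordism may carry several components (born by later $0$-handles), which is fine because ``concordance'' constrains only the two ends and the topological type of the total surface, not the intermediate link types. If the paper's working definition of ``decomposable Lagrangian slice'' is already stated as ``a $0$-handle capped by a decomposable concordance from $\Upsilon$,'' then even this step is immediate and the corollary is a one-line consequence of Theorem~\ref{mainThrm}.
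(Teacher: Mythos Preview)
Your proposal is correct and matches the paper's intended argument: the paper states this result as a corollary without proof, treating it as an immediate consequence of Theorem~\ref{mainThrm} via the stacking argument outlined in the paragraph preceding the corollary. The one detail the paper leaves implicit---that a decomposable Lagrangian slice disk can be split as a single $0$-handle capped by a decomposable concordance from $\Upsilon$---is exactly what you supply in your first paragraph, and your Euler-characteristic/connectedness argument for why the upper piece is an annulus is sound.
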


One can ask if Whitehead doubling a smoothly slice knot that does \emph{not} have a Lagrangian slice representative can yield a new slice knot that does, indeed, have a Lagrangian slice representative.  The second main theorem of this paper shows that this is not possible if the maximal Thurston-Bennequin number of the original slice knot is not $-1$.

\begin{theorem} \label{thm:no-new-slice}
    Suppose $P$ is a smooth solid torus knot for which the untwisted satellite of $P$ around the unknot $U$ is slice.  If $K$ is a slice knot with maximal Thurston-Bennequin number strictly less than $-1$, then the untwisted satellite of $P$ around the  $K$ has no Lagrangian slice representatives.
\end{theorem}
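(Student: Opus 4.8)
The plan is to obstruct a Lagrangian slice disk for the satellite $\lsat{\Lambda}{P}$ — where $\Lambda$ is any Legendrian representative of $K$ — by deriving a contradiction with the maximal Thurston--Bennequin bound on $K$. The key observation is that the Thurston--Bennequin number of a Legendrian satellite is governed by the classical invariants of the companion and the pattern: concretely, for $\Lambda$ a Legendrian companion and $\Pi$ a Legendrian pattern in $J^1S^1$ with $m$ strands and winding number $w$, one has a formula of the shape
\begin{equation*}
    \tb(\lsat{\Lambda}{\Pi}) = w^2\,\tb(\Lambda) + \tb(\Pi),
\end{equation*}
where $\tb(\Pi)$ is the appropriately normalized Thurston--Bennequin number of the pattern in the solid torus (see, e.g., Ng's satellite formula). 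For the untwisted Whitehead-type patterns relevant here the winding number is $w=0$, so the companion's $\tb$ drops out entirely and $\tb(\lsat{\Lambda}{\Pi}) = \tb(\Pi)$ is a constant independent of $\Lambda$.

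**First I would** record what it costs, in Thurston--Bennequin terms, to be Lagrangian slice: if a Legendrian knot $\Lambda_0$ bounds an (exact) Lagrangian slice disk, then by Chantraine's inequality (and its sharpenings, see \cite{chantraine}) its Thurston--Bennequin number is forced to be $\tb(\Lambda_0) = -1$ — the Lagrangian disk has Euler characteristic $1$, and the genus-zero Lagrangian cobordism bound pins $\tb$ exactly. **Next I would** combine this with the satellite $\tb$-formula: if the untwisted satellite $\lsat{\Lambda}{P}$ admitted a Lagrangian slice disk for \emph{some} Legendrian $\Lambda$ representing $K$, then we would need $\tb(P) = \tb(\lsat{\Lambda}{P}) = -1$. **Then I would** feed this constraint back into the companion: pick a Legendrian representative $\Lambda$ of $K$ with $\tb(\Lambda) = \overline{\tb}(K)$, the maximal Thurston--Bennequin number, and form the \emph{twisted-up} pattern $P' = P + \tw_m$ so that the winding number becomes nonzero (this is exactly the device of Theorem~\ref{mainThrm}), giving $\tb(\lsat{\Lambda}{P + \tw_m}) = w^2\,\overline{\tb}(K) + \tb(P + \tw_m)$ with $w^2 \geq 1$. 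The hypothesis that $\Upsilon$ (the maximal unknot, $\tb = -1$) satellites to a slice knot pins down $\tb(P+\tw_m)$ relative to $\tb(P)$, and the assumption $\overline{\tb}(K) < -1$ then makes $\tb(\lsat{\Lambda}{P+\tw_m})$ \emph{strictly smaller} than the value $-1$ it would need to have to be Lagrangian slice — contradicting Chantraine's bound for the satellite around $\Lambda$. Running this comparison carefully between the $\Upsilon$-satellite and the $\Lambda$-satellite is what converts "$\overline{\tb}(K)\neq -1$" into the obstruction.

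**The main obstacle** I expect is bookkeeping the normalization of $\tb$ for patterns in $J^1S^1$ and getting the satellite formula and the twisting correction $\tw_m$ to interact correctly — in particular, verifying that the \emph{untwisted} condition on $P$ really does force winding number $0$ in the naive satellite but that the controlled modification by $\tw_m$ produces a definite, sign-correct shift, so that the inequality goes the right way. A secondary subtlety is making sure the argument uses only the \emph{existence} of a Lagrangian slice disk for \emph{some} representative (that is, quantifying over all Legendrian $\Lambda$ smoothly isotopic to $K$), which is handled by choosing $\Lambda$ to maximize $\tb$: any Lagrangian slice disk, living over whichever representative, still must satisfy $\tb = -1$ at its boundary, and the satellite $\tb$ over the $\tb$-maximizing $\Lambda$ already violates this, hence so does every other choice. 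Once the formula and normalizations are nailed down, the contradiction is immediate.
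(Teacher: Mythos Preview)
Your overall strategy---force $\tb=-1$ on a putative Lagrangian slice representative via Chantraine, apply the satellite $\tb$ formula, and compare against the unknot companion---is the same as the paper's. But there is a genuine gap in how you relate the Legendrian satellite to the \emph{untwisted} smooth satellite, and this is not merely the bookkeeping issue you flag at the end.

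You treat $\lsat{\Lambda}{\Pi}$ as a Legendrian representative of the untwisted satellite $\sat{0}{K}{P}$, but in fact $\lsat{\Lambda}{\Pi}$ is smoothly the $\tb(\Lambda)$-\emph{framed} satellite $\sat{\tb(\Lambda)}{K}{P}$. To represent $\sat{0}{K}{P}$ one must correct the framing by inserting $-\tb(\Lambda)$ full twists, i.e.\ use $\lsat{\Lambda}{\Pi-\tb(\Lambda)\,\tw_m}$. This is exactly the mechanism by which $\overline{\tb}(K)$ enters the inequality: in the paper's computation the $\delta(\Pi)^2\tb(\Lambda)$ contribution from Lemma~\ref{lem:sat-tb} cancels against the $\delta^2$-part of the twist correction via Equation~(\ref{eq:tb-tw}), leaving $\tb(\Pi)+\tb(\Lambda)\,m$. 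So the winding number drops out altogether and it is the \emph{strand number} $m$ that couples to $\overline{\tb}(K)$. Your plan instead assumes $\delta(\Pi)=0$ (unjustified, since $P$ is an arbitrary pattern) and then attempts to reintroduce $\tb(\Lambda)$ by adding a single $\tw_m$ ``so that the winding number becomes nonzero.'' But concatenating $\tw_m$ does not alter the winding number at all, and $\lsat{\Lambda}{\Pi+\tw_m}$ represents $\sat{\tb(\Lambda)+1}{K}{P}$, not $\sat{0}{K}{P}$, so the comparison with the Lagrangian-slice hypothesis never engages. Once you make the correct framing adjustment, the argument runs as the paper has it: the slice assumption gives $\tb(\Pi)+\overline{\tb}(K)\,m=-1$, hence $\tb(\Pi)\geq 2m-1$; slice--Bennequin applied to $\lsat{\Upsilon}{\Pi+\tw_m}$ (which \emph{is} a representative of $\sat{0}{U}{P}$, since $\tb(\Upsilon)=-1$) gives $\tb(\Pi)\leq m-1$; contradiction for $m\geq 1$.
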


Conjecture 4.1 of \cite{cns:obstr-concordance} asserts that a smoothly slice knot has a Lagrangian slice representative if and only if its maximal Thurston-Bennequin number is $-1$.  If the conjecture holds, then Theorem~\ref{thm:no-new-slice} can be strengthened to say that one cannot produce new Lagrangian slice knots by taking satellites of knots that are not Lagrangian slice.  Theorem~\ref{thm:no-new-slice} may also be thought of as providing evidence for Conjecture 4.1 of \cite{cns:obstr-concordance}.

Pushing further, one might reasonably adapt the old question of whether a knot is smoothly slice if and only if its untwisted Whitehead double is (see Question 1.38 in Kirby's problem list \cite{kirby:list}) to the Legendrian setting:

\begin{conjecture}
    A Legendrian knot $\Lambda$ is Lagrangian slice if and only if its untwisted Whitehead double $\lsat{\Lambda}{\Wh+\tw_2}$ is Lagrangian slice.
\end{conjecture}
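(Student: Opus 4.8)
The plan is to handle the two implications separately: like its smooth model (Question~1.38 of \cite{kirby:list}) the equivalence has a constructive direction and a genuinely hard one, so on the hard side I would aim only for partial and conditional statements.

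\emph{The ``only if'' direction.} Suppose $\Lambda$ is Lagrangian slice, so $\tb(\Lambda) = -1$ (and $r(\Lambda) = 0$) and $\lsat{\Lambda}{\Wh+\tw_2}$ is smoothly the \emph{untwisted} Whitehead double of the underlying knot of $\Lambda$; in particular $\lsat{\Upsilon}{\Wh+\tw_2}$ is Legendrian isotopic to $\Upsilon$ and bounds the standard decomposable Lagrangian disk. If $\Lambda$ is in fact \emph{decomposably} Lagrangian slice, the conclusion is exactly Corollary~\ref{sliceCorr} with $\Pi = \Wh$ and $m = 2$: peel the initial $0$-handle off the decomposable filling of $\Lambda$ to obtain a decomposable concordance from $\Upsilon$ to $\Lambda$, apply Theorem~\ref{mainThrm} to turn it into a decomposable concordance from $\Upsilon = \lsat{\Upsilon}{\Wh+\tw_2}$ to $\lsat{\Lambda}{\Wh+\tw_2}$, and cap with the minimum of $\Upsilon$. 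So this direction already holds for every Legendrian knot currently known to be Lagrangian slice. To reach an arbitrary Lagrangian slice $\Lambda$ one must close the gap noted after Corollary~\ref{sliceCorr}; my preferred route is to promote the satellite-concordance construction of \cite[Theorem~2.4]{cns:obstr-concordance} so that it takes as input a Lagrangian \emph{filling} of the companion --- deformed to be cylindrical in the region where the pattern is inserted --- rather than a concordance, which would sidestep the question of whether every Lagrangian slice disk decomposes as a concordance-to-$\Upsilon$ atop a minimum.

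\emph{The ``if'' direction.} This is the substantive direction, and I would expect only conditional or partial results. Suppose $\lsat{\Lambda}{\Wh+\tw_2}$ is Lagrangian slice; then it is smoothly slice, so by the Fox--Milnor condition the twisting of this Whitehead double --- a quantity determined by $\tb(\Lambda)$ and equal to $0$ exactly when $\tb(\Lambda) = -1$ --- must be a pronic number $0, 2, 6, 12, \dots$, already forcing $\tb(\Lambda) \in \{-1, 1, 5, 11, \dots\}$. In the principal case $\tb(\Lambda) = -1$ the double is the untwisted double $D(K)$ of the underlying knot $K$; granting Question~1.38 of \cite{kirby:list}, $K$ is smoothly slice, whereupon Theorem~\ref{thm:no-new-slice} --- applicable with $P = \Wh + \tw_2$, since $\lsat{\Upsilon}{\Wh+\tw_2}$ is the unknot --- rules out $\overline{\tb}(K) < -1$, for otherwise $D(K)$ would admit no Lagrangian slice representative; hence $\overline{\tb}(K) = -1$, and granting \cite[Conjecture~4.1]{cns:obstr-concordance} the knot $K$ has \emph{some} Lagrangian slice representative. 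What remains is to promote ``$K$ has a Lagrangian slice representative'' to ``the given representative $\Lambda$ is Lagrangian slice'' --- that is, to understand which Legendrian representatives of a fixed smoothly slice knot realizing $\overline{\tb} = -1$ are Lagrangian slice --- together with the leftover positively twisted cases $\tb(\Lambda) \in \{1, 5, 11, \dots\}$. Unconditionally, one should record what Theorem~\ref{thm:no-new-slice} already gives: the ``if'' direction holds for every $\Lambda$ whose underlying smooth knot is slice with $\overline{\tb} < -1$, and --- modulo Question~1.38 of \cite{kirby:list} --- for every $\Lambda$ whose underlying knot is not slice (in which case $D(K)$ is not smoothly slice, and $\Lambda$, being not smoothly slice, is not Lagrangian slice).

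\emph{The main obstacle.} Beyond the evident one --- the full ``if'' direction rests on the still-open Question~1.38 of \cite{kirby:list} and \cite[Conjecture~4.1]{cns:obstr-concordance} --- the sharpest remaining difficulty is the representative-dependence problem: among Legendrian representatives of a given smoothly slice knot with $\tb = -1$ and $r = 0$, deciding which are Lagrangian slice. A realistic intermediate target is therefore the Conjecture restricted to those $\Lambda$ whose underlying smooth knot is already known to be slice, which by Theorem~\ref{thm:no-new-slice} reduces precisely to this representative-dependence question.
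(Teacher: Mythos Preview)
The statement is labeled a \emph{Conjecture} in the paper; the paper does not prove it and presents it as an open problem, motivated by the analogy with Kirby's Question~1.38 and supported only by the partial evidence of Corollary~\ref{sliceCorr} and Theorem~\ref{thm:no-new-slice}. There is therefore no proof in the paper against which to compare your proposal.

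What you have written is, accordingly, not a proof but a research outline, and you are candid about which parts are established and which are conditional. Your treatment of the ``only if'' direction in the decomposable case is exactly Corollary~\ref{sliceCorr} (and your observation that a decomposable slice disk necessarily begins with a $0$-handle, hence factors as a $0$-handle followed by a decomposable concordance from $\Upsilon$, is correct); the gap you flag for a general Lagrangian slice $\Lambda$ is the same one the paper notes just before that corollary. Your analysis of the ``if'' direction correctly identifies its dependence on Kirby's Question~1.38 and on \cite[Conjecture~4.1]{cns:obstr-concordance}, together with the residual representative-dependence problem. One small caution: Theorem~\ref{thm:no-new-slice} applies only once the double is known to be \emph{untwisted}, i.e.\ once $\tb(\Lambda)=-1$, so the ``leftover'' cases with $\tb(\Lambda)>-1$ are not merely residual bookkeeping but require an independent argument that nothing in the paper supplies.
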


The rest of the paper is organized as follows:  we review necessary background on Legendrian solid torus knots, the Legendrian satellite construction, and Lagrangian cobordisms between Legendrians in Section~\ref{background}. We then proceed to prove Theorem~\ref{mainThrm} in Section~\ref{mainProof} and Theorem~\ref{thm:no-new-slice} in Section~\ref{sliceCorr}.

\section{Background}\label{background}

In this section, we describe the two central constructions in this paper:  Legendrian satellites and decomposable Lagrangian cobordisms.  We assume familiarity with basic notions of Legendrian knot theory in $\bbR^3$ (as in \cite{etnyre:knot-intro}) and of Lagrangian submanifolds (as in \cite{audin-lalonde-polterovich}), though we quickly review Legendrian links in the solid torus $J^1S^1$.

\subsection{Legendrian Solid Torus Links} \label{solid-torusBkg}

The canonical solid torus in contact topology is the $1$-jet space $J^1S^1 = S^1 \times \bbR^2$, which has coordinates $(\theta, y, z)$ and contact form $dz-y\,d\theta$.  A Legendrian link $\Pi \subset J^1S^1$ may be represented by its front diagram, i.e. its image under the projection $\pi_f(\theta, y, z) = (\theta,z)$.  It is convenient to draw front diagrams in $[0,1] \times \bbR$, with $\{0\} \times \bbR$ identified with $\{1\} \times \bbR$ as in Figure~\ref{fig:torus-front}; the Legendrian solid torus links $\Wh$ and $\tw_m$ depicted in Figure~\ref{fig:wh-tw} will be of particular interest in this paper.

\begin{figure}
    \centering
    \includegraphics{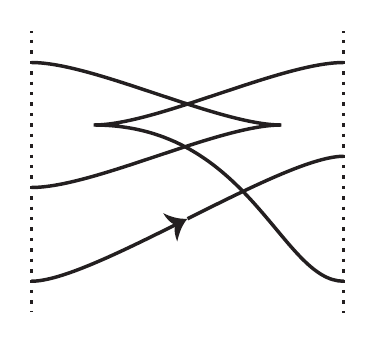}
    \caption{A front diagram for a Legendrian link in the solid torus $J^1S^1$.}
    \label{fig:torus-front}
\end{figure}

\begin{figure}
\labellist
    \small
    \pinlabel (a) [l] at 35 0
    \pinlabel (b) [l] at 180 0
    \endlabellist

    \centering
    \includegraphics{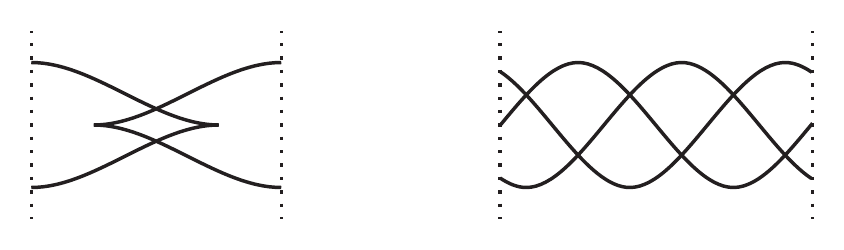}
    \caption{The Whitehead solid torus knot $\Wh$ (a) and the full $m$-twist link $\tw_m$ with $m=3$ (b).} 
    \label{fig:wh-tw}
\end{figure}

There are three invariant quantities we may associate to an oriented Legendrian link $\Pi \subset J^1S^1$:
\begin{enumerate}
\item The \allbf{Thurston-Bennequin number} $\tb(\Pi)$ is computed from the front diagram as the writhe minus half the number of cusps.
\item The \allbf{rotation number} $\r(\Pi)$ is computed from the front diagram as half the difference between the number of cusps at which $\pi_f(\Pi)$ is oriented downwards and the number of cusps at which $\pi_f(\Pi)$ is oriented upwards.
\item The \allbf{winding number} or \allbf{degree} $\delta(\Pi)$ is the signed intersection number between the front diagram and $\{0\} \times \bbR$; this is, of course, simply the homology class of $\Pi$ in $H_1(J^1S^1) \simeq \bbZ$. 
\end{enumerate}
For a fixed front diagram of $\Pi$, we can also define the \allbf{strand number} $s(\pi_f(\Pi))$ to be the geometric intersection number of the front diagram with $\{0\} \times \bbR$. 

\begin{example}
The front diagram in Figure~\ref{fig:torus-front} represents a Legendrian knot $\Pi$ with $\tb(\Pi) = 2$, $\r(\Pi) = 0$, $\delta(\Pi) = 1$, and $s(\pi_f(\Pi)) = 3$. It is also easy to see from the diagram that $\tb(\Wh) = 1$.
\end{example}

\begin{example}
Consider a full twist $\tw_m$ with a given orientation. A somewhat involved but elementary diagrammatic argument allows us to compute the Thurston-Bennequin number of a full twist:
\begin{equation} \label{eq:tb-tw}
 \tb(\tw_m) = \delta(\tw_m)^2-m.  
\end{equation}
Note that the value of $\delta(\tw_m)^2$ is dependent on the specific orientations of the strands in $\tw_m$.
\end{example}

The operation of adding a full twist to an existing Legendrian solid torus knot plays a key role in the statement of Theorem~\ref{mainThrm}.  More generally, if two Legendrian links $\Pi_1$ and $\Pi_2$ have front diagrams with the same strand number, then we can form a new Legendrian link $\Pi_1 + \Pi_2$ by concatenating the front diagrams, with $\pi_f(\Pi_1)$ to the left of $\pi_f(\Pi_2)$.  We will be particularly interested in sums of the form $\Pi + \tw_m$.

\subsection{Satellites}\label{satellitesBkg}

We begin by setting notation for the satellite of a knot in the smooth category. Let $K$ be a smooth knot in $\bbR^3$, let $P$ be a smooth link in the solid torus $S^1 \times D^2$, and let $t \in \bbZ$.  We define the \allbf{$t$-framed satellite} of $K$ by $P$, denoted $\sat{t}{K}{P}$, to be the link that results from removing a normal neighborhood of $K$ and gluing in the solid torus containing $P$ with framing $t$ with respect to the Seifert framing of $K$.  This operation is easily generalized when $K$ is a link with $n$ ordered components, $\mathbf{P} = (P_1, \ldots, P_n)$ is an $n$-tuple of solid torus links, and $\mathbf{t} = (t_1, \ldots, t_n)$ is an $n$-tuple of integers corresponding to framings with respect to Seifert surfaces of individual components.

To define Legendrian satellite of a Legendrian knot $\Lambda \subset \bbR^3$  by a Legendrian link $\Pi \subset J^1S^1$, we follow \cite{ng:satellite, lenny-lisa}. It is a fundamental fact that $\Lambda$ has a canonical neighborhood that is contactomorphic to $J^1S^1$.  We define the \allbf{Legendrian satellite} of $\Lambda$ by $\Pi$, denoted $\lsat{\Lambda}{\Pi}$, to be the link that results from removing a standard neighborhood of $\Lambda$ and gluing in the solid torus containing $\Pi$.  Again, there is an obvious generalization to satellites of Legendrian links. Note that, unlike in the smooth case, the Legendrian satellite has a built-in framing; in particular, as smooth knots, we have $\lsat{\Lambda}{\Pi} = \sat{\tb(\Lambda)}{\Lambda}{\Pi}$.  

The following lemma, which essentially states that the satellite operation is well-defined, is straightforward from the definition of a Legendrian satellite.

\begin{lemma}[\cite{lenny-lisa}]\label{satLiftIso}
Given $n$-component Legendrian links $\Lambda,\Lambda' \subset \bbR^3$ and $n$-tuples of Legendrian links $\mathbf{\Pi}, \mathbf{\Pi}' \subset J^1S^1$, if $\Lambda$ is Legendrian isotopic to $\Lambda'$ and $\mathbf{\Pi}$ is Legendrian isotopic to $\mathbf{\Pi}'$, then $\lsat{\Lambda}{\mathbf{\Pi}}$ is Legendrian isotopic to $\lsat{\Lambda'}{\mathbf{\Pi}'}$. 
\end{lemma}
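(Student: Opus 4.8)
The plan is to use transitivity of Legendrian isotopy to reduce the statement to the two special cases in which only the pattern changes (i.e.\ $\Lambda = \Lambda'$) and in which only the companion changes (i.e.\ $\mathbf{\Pi} = \mathbf{\Pi}'$), and in each case to promote the given Legendrian isotopy to an ambient contact isotopy of $\bbR^3$ using the contact isotopy extension theorem (see \cite{etnyre:knot-intro}).

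For the pattern-change case, recall that $\lsat{\Lambda}{\mathbf{\Pi}}$ is built by gluing the solid tori carrying the $\Pi_i$ into fixed, pairwise-disjoint standard neighborhoods $N_1, \dots, N_n$ of the components of $\Lambda$, each identified with $J^1S^1$. A componentwise Legendrian isotopy from $\mathbf{\Pi}$ to $\mathbf{\Pi}'$ takes place inside these copies of $J^1S^1$; applying the contact isotopy extension theorem inside each $J^1S^1 \cong N_i$ yields a compactly supported contact isotopy there, which extends by the identity to a contact isotopy of $\bbR^3$ carrying $\lsat{\Lambda}{\mathbf{\Pi}}$ to $\lsat{\Lambda}{\mathbf{\Pi}'}$.

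For the companion-change case, a Legendrian isotopy from $\Lambda$ to $\Lambda'$ extends to an ambient contact isotopy $\phi_t$ of $\bbR^3$ with $\phi_0 = \mathrm{id}$ and $\phi_1(\Lambda) = \Lambda'$. Then the $\phi_1(N_i)$ are pairwise-disjoint standard neighborhoods of the components of $\Lambda'$, and $\phi_1$ identifies $\lsat{\Lambda}{\mathbf{\Pi}}$ with the satellite of $\Lambda'$ formed using the neighborhoods $\phi_1(N_i)$ and the transported identifications with $J^1S^1$. To finish, one invokes the standard neighborhood theorem for Legendrian knots: the canonical identification of a standard neighborhood of a Legendrian knot with $J^1S^1$ is unique up to a contactomorphism of $J^1S^1$ that is isotopic to the identity, so the glued-in patterns are the same up to Legendrian isotopy; hence the satellite built from $\phi_1(N_i)$ agrees with $\lsat{\Lambda'}{\mathbf{\Pi}}$ up to Legendrian isotopy, and combining the two cases completes the proof.

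The one genuinely delicate point --- and the place I would be most careful --- is this last step: verifying that any two identifications of a neighborhood of $\Lambda'$ with $J^1S^1$ differ by a contactomorphism contact-isotopic to the identity, rather than by something like a meridional Dehn twist or an orientation-reversing map, so that the pattern is unaffected. This is exactly where the well-definedness of the built-in $\tb$-framing of the Legendrian satellite enters, and it is the substance underlying the reference to \cite{lenny-lisa}; the remainder is a routine combination of the isotopy extension theorem and transitivity.
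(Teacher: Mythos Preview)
The paper does not actually prove this lemma: it is quoted from \cite{lenny-lisa} and described only as ``straightforward from the definition of a Legendrian satellite,'' with no argument given. Your proposal therefore supplies what the paper omits, and your two-step reduction (vary the pattern with the companion fixed, then vary the companion with the pattern fixed) together with the contact isotopy extension theorem is exactly the standard way to justify well-definedness. You have also correctly isolated the one nontrivial point---that two standard-neighborhood identifications with $J^1S^1$ differ by a contactomorphism contact-isotopic to the identity (in particular preserving the longitudinal framing)---which is precisely the content borrowed from \cite{lenny-lisa}; so your write-up is both correct and appropriately honest about where the work lies.
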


At a more practical level, Figure~\ref{fig:front-sat} depicts a straightforward procedure for drawing a front diagram of $\lsat{\Lambda}{\Pi}$ from front diagrams of $\Lambda$ and of $\Pi$.  Suppose that $m = s(\pi_f(\Pi))$. Start by taking the $m$-copy of a front diagram of $\Lambda$, i.e. $m$ copies of $\Lambda$ that differ by a small vertical shift. Cut open the front diagram of $\pi_f(\Pi)$ along $\{0\} \times \bbR$ to create a diagram in $[0,1] \times \bbR$. Replace the $m$-stranded tangle created by the $m$-copy of a small interval in the front of $\Lambda$ (away from cusps and crossings) with the cut open front diagram $\pi_f(\Pi)$.  The result is a front diagram for $\lsat{\Lambda}{\Pi}$.

\begin{figure}
\labellist
    \small
    \pinlabel $\Lambda$ [l] at 140 120
    \pinlabel $\Pi$ [l] at 140 10
    \pinlabel $\lsat{\Lambda}{\Pi}$ [l] at 380 10
\endlabellist

    \centering
    \includegraphics[width=4.5in]{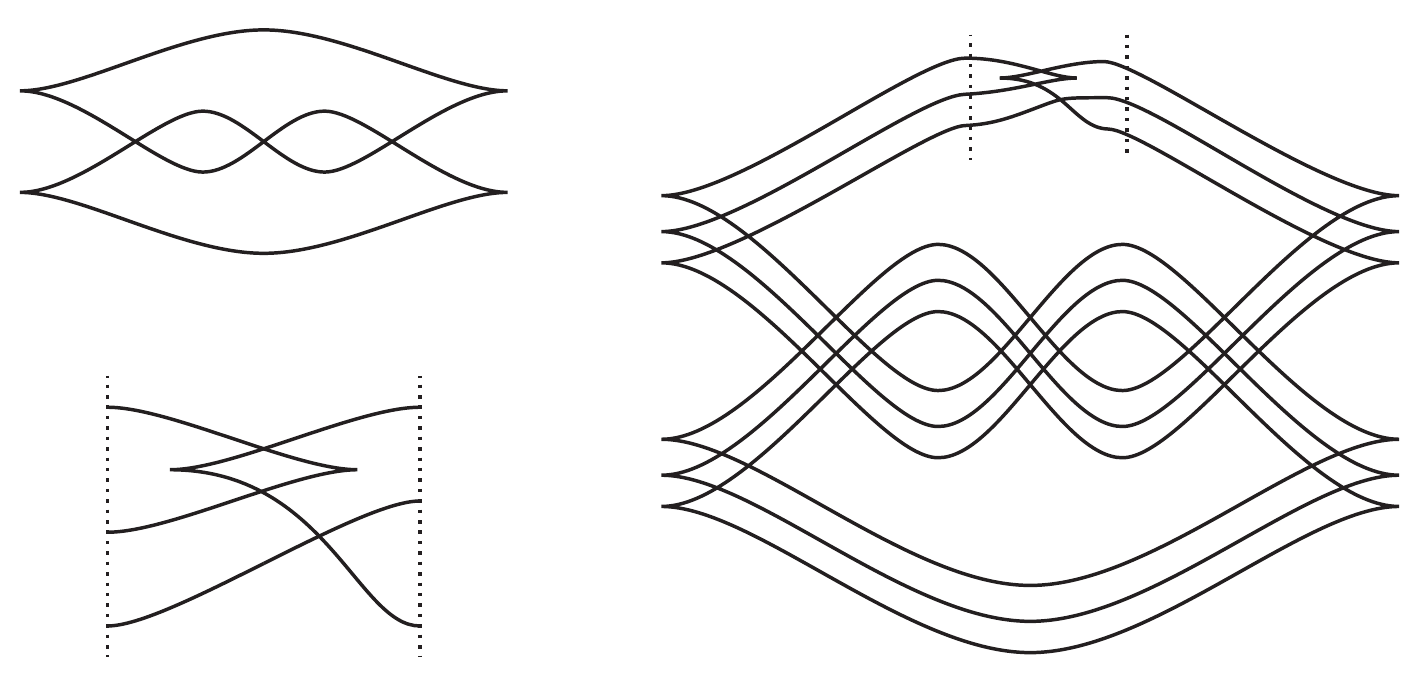}
    \caption{A procedure for creating a front diagram for $\lsat{\Lambda}{\Pi}$.} 
    \label{fig:front-sat}
\end{figure}

It is straightforward to compute the classical invariants of a Legendrian satellite from those of its consitutents, as noted in \cite[Remark 2.4]{ng:satellite}:

\begin{lemma} \label{lem:sat-tb}
    For a Legendrian knot $\Lambda$ and a Legendrian solid torus link $\Pi$, the classical invariants of the satellite $\lsat{\Lambda}{\Pi}$ may be computed as follows:
    \begin{align*}
        \tb(\lsat{\Lambda}{\Pi}) &= \delta(\Pi)^2 \tb(\Lambda) + \tb(\Pi),\\
        \r(\lsat{\Lambda}{\Pi}) &= \delta(\Pi) \r(\Lambda) + \r(\Pi).
    \end{align*}
\end{lemma}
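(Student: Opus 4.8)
The plan is to read both formulas off the front-diagram recipe for $\lsat{\Lambda}{\Pi}$ pictured in Figure~\ref{fig:front-sat}, which is the computation sketched in \cite[Remark 2.4]{ng:satellite}. Fix front diagrams $F_\Lambda$, $F_\Pi$ with $m = s(\pi_f(\Pi))$, and let $F$ be the resulting front for $\lsat{\Lambda}{\Pi}$: take the $m$-copy of $F_\Lambda$ (that is, $m$ copies differing by small shifts in the Reeb direction) and replace a trivial $m$-strand sub-tangle by the cut-open copy of $F_\Pi$. Orient $F$ by the orientation of $\Pi$; at a generic vertical slice through $F_\Pi$, say $p$ of the $m$ strands run in the $+\theta$ direction and $q = m-p$ in the $-\theta$ direction, so $\delta(\Pi) = p - q$, and note that along every strand of the $m$-copy of $F_\Lambda$ the co-/counter-orientation pattern is likewise governed by $\delta(\Pi)$.

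For $\tb$, I would write $\tb(\lsat{\Lambda}{\Pi}) = w(F) - \tfrac12 c(F)$, with $w$ the writhe and $c$ the cusp count, and sort the cusps and crossings of $F$ by where they sit. Each cusp of $F_\Lambda$ becomes $m$ cusps of $F$, and the remaining cusps of $F$ are those internal to $F_\Pi$, so $c(F) = m\,c(F_\Lambda) + c(F_\Pi)$. The crossings split into: (i) crossings internal to $F_\Pi$, contributing $w(F_\Pi)$; (ii) the $m^2$ crossings at each crossing of $F_\Lambda$, whose signs are the original sign times $(\pm1)(\pm1)$ according to the orientations of the two strands involved, so that they sum to $\delta(\Pi)^2\, w(F_\Lambda)$; and (iii) the crossings that the $m$-copy of $F_\Lambda$ is forced to acquire near its cusps in order to carry the framing $\tb(\Lambda)$.

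The one point that needs care is type (iii). When all $m$ strands are co-oriented with $\Lambda$, the $m$-copy is the $m$-cable of $\Lambda$ with the contact framing: its components pairwise link with linking number $\tb(\Lambda)$ and each has Thurston--Bennequin number $\tb(\Lambda)$, which forces exactly $\binom{m}{2}$ negative crossings near each cusp of $F_\Lambda$, one for each pair of strands. Re-orienting the $q$ counter-oriented strands flips the sign of precisely the $pq$ of these crossings that join a co-oriented and a counter-oriented strand, so the signed contribution of type (iii) at each cusp is $-\binom{p}{2} - \binom{q}{2} + pq = \tfrac12\bigl(m - \delta(\Pi)^2\bigr)$. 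Adding the three contributions to $w(F)$ and subtracting $\tfrac12 c(F)$, the terms involving $m$ cancel and one is left with $\delta(\Pi)^2\bigl(w(F_\Lambda) - \tfrac12 c(F_\Lambda)\bigr) + \bigl(w(F_\Pi) - \tfrac12 c(F_\Pi)\bigr)$, which is $\delta(\Pi)^2\tb(\Lambda) + \tb(\Pi)$.

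The rotation number is easier, since it only sees cusps. The cusps internal to $F_\Pi$ contribute $2\r(\Pi)$ to the quantity (number of down cusps $-$ number of up cusps), while at each cusp of $F_\Lambda$ the $m$-copy produces $m$ cusps, of which those on the $p$ co-oriented strands keep the type of the original cusp and those on the $q$ counter-oriented strands reverse it; this cluster therefore contributes $\pm\delta(\Pi)$, with the sign of the original cusp. Summing over the cusps of $F_\Lambda$ gives $2\delta(\Pi)\r(\Lambda)$, whence $\r(\lsat{\Lambda}{\Pi}) = \delta(\Pi)\r(\Lambda) + \r(\Pi)$. As a sanity check, both identities also follow conceptually from additivity in the standard neighborhood $N(\Lambda) \cong J^1S^1$: $\tb$ is the linking number of a Legendrian with its Reeb pushoff, and both $\lsat{\Lambda}{\Pi}$ and its pushoff sit inside $N(\Lambda)$ with winding number $\delta(\Pi)$, so the linking-number formula for curves in a solid torus with framing coefficient $\tb(\Lambda)$ yields $\delta(\Pi)^2\tb(\Lambda) + \tb(\Pi)$, and $\r$ obeys the analogous additivity for a relative Euler number. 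The main obstacle in a careful write-up is exactly the bookkeeping of the type-(iii) framing crossings and of the cusp types; everything else is routine.
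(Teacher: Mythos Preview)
Your argument is correct: the front-diagram bookkeeping for the three types of crossings and the cusp count goes through exactly as you describe, and the cancellation yielding $\delta(\Pi)^2\tb(\Lambda)+\tb(\Pi)$ is right; the rotation-number count is likewise fine. Note, however, that the paper does not actually supply a proof of this lemma at all --- it simply records the formulas as ``straightforward'' and cites \cite[Remark~2.4]{ng:satellite}. So there is nothing to compare your proposal against beyond that citation; what you have written is precisely the detailed computation that the paper (and Ng) leave to the reader, together with a helpful conceptual sanity check via linking numbers in the framed solid torus.
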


\subsection{Decomposable Cobordisms and Concordances}\label{cobordismsBkg}

The second fundamental notion in this paper is that of a Lagrangian cobordism between Legendrians.  

\begin{definition} \label{defn:lagr-cob}
    Given two Legendrian links $\Lambda_-$ and $\Lambda_+$ in the standard contact $\bbR^3$, an \allbf{(exact, orientable, cylindrical-at-infinity) Lagrangian cobordism} $L$ from $\Lambda_-$ to $\Lambda_+$, denoted $\Lambda_- \prec_L \Lambda_+$, is an exact, orientable Lagrangian submanifold of the symplectization $(\bbR \times \bbR^3, d(e^t \alpha_0))$ so that there is a pair of real numbers $T_\pm$ satisfying
    \begin{enumerate}
        \item $L \cap (-\infty,T_-] \times \bbR^3 = (-\infty,T_-] \times \Lambda_-$,
        \item $L \cap [T_+,\infty) \times \bbR^3 = [T_+,\infty) \times \Lambda_+$, and
        \item The primitive of $e^t\alpha_0$ along $L$ is constant for $t < T_-$ and for $t> T_+$.  See \cite{chantraine:disconnected-ends} for an explanation of the necessity of this condition.
    \end{enumerate}
\end{definition}

We will drop the descriptors ``exact, orientable, and cylindrical-at-infinity'' from this point on, as they shall be understood to hold. Two special cases will play important roles in this paper. A \allbf{Lagrangian concordance} is a Lagrangian cobordism that is diffeomorphic to a cylinder. A \allbf{Lagrangian filling} is a Lagrangian cobordism with an empty negative end.  There is a close relationship between Lagrangian fillings and the slice genus of a smooth knot.  On one hand, the slice-Bennequin inequality \cite{rudolph:qp-obstruction} says that 
\begin{equation} \label{eq:slice-bennequin}
    \overline{\tb}(K) \leq 2g_4(K) -1.
\end{equation}
On the other, if a smooth knot $K$ has a Legendrian representative $\Lambda$ with a Lagrangian filling $L$, then Chantraine proved in \cite{chantraine} that the slice-Bennequin inequality is sharp and, in fact, 
\begin{equation} \label{eq:filling-g4}
g(L) = g_4(K).
\end{equation}

A Lagrangian cobordism is \allbf{decomposable} if it is the result of stacking the elementary cobordisms defined by the following theorem.

\begin{theorem}[\cite{bst:construct, rizell:surgery, ehk:leg-knot-lagr-cob}] \label{thm:construct}
    If two Legendrian links $\Lambda_-$ and $\Lambda_+$ in $\bbR^3$ are related by any of the following moves, which we shall call \allbf{elementary cobordism moves}, then there exists an exact, orientable Lagrangian cobordism $\Lambda_- \prec_L \Lambda_+$.
    \begin{description}
        \item[Isotopy] $\Lambda_-$ is Legendrian isotopic to $\Lambda_+$,
        \item[$0$-Handle] The front diagrams for $\Lambda_-$ and $\Lambda_+$ are identical except for the addition of a disjoint Legendrian unknot in $\Lambda_+$ as in Figure~\ref{fig:construct} (left).
        \item[$1$-Handle] The front diagrams for $\Lambda_-$ and $\Lambda_+$ are related as in Figure~\ref{fig:construct} (right).
    \end{description}
\end{theorem}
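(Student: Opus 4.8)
The plan is to prove the theorem by an explicit construction, handling the three moves separately. In each case one produces a Lagrangian submanifold of the symplectization $(\bbR\times\bbR^3, d(e^t\alpha_0))$ from a local model that is cylindrical outside a compact region, and then checks the two properties that are not automatic: exactness together with the requirement that the primitive of $e^t\alpha_0$ be constant on each cylindrical end (condition (3) of Definition~\ref{defn:lagr-cob}), and the cylindrical-at-infinity condition itself. Orientability will be visible from the models.

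For the \textbf{Isotopy} move I would take a Legendrian isotopy from $\Lambda_-$ to $\Lambda_+$, extend it via the Legendrian isotopy extension theorem to an ambient contact isotopy $(\psi_s)_{s\in[0,1]}$ of $\bbR^3$ with compactly supported generating contact Hamiltonian, reparametrize $s=\rho(t)$ by a smooth nondecreasing cutoff $\rho$ that equals $0$ for $t\le T_-$ and $1$ for $t\ge T_+$, and let $L$ be the standard Lagrangian suspension of the lift of this contact isotopy to the symplectization. Then $L$ is an embedded exact Lagrangian cylinder; it equals $(-\infty,T_-]\times\Lambda_-$ below and $[T_+,\infty)\times\Lambda_+$ above since $\rho$ is locally constant there, and the primitive is constant on each end because $\psi_s$ preserves the contact structure and its generator is compactly supported, so no action leaks to infinity. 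For the \textbf{$0$-Handle} move, $\Lambda_+=\Lambda_-\sqcup\mu$ with $\mu$ a standard $\tb=-1$ Legendrian unknot supported in a Darboux ball disjoint from $\Lambda_-$; I would take $L=(\bbR\times\Lambda_-)\sqcup D$, where $D$ is the standard Lagrangian ``cap'' disk, written explicitly in the symplectization of that ball: it has empty negative end, is cylindrical over $\mu$ for $t$ large, and is automatically exact since $H^1(D)=0$.

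The \textbf{$1$-Handle} move is the main obstacle. Here $\Lambda_-$ and $\Lambda_+$ differ by a single saddle (pinch) move on the front inside a small box, so outside the box one uses the cylinder over the common Legendrian, and inside one glues in a local Lagrangian saddle --- a one-parameter family of hyperbolas realizing the transition between a two-strand and a one-strand tangle as an embedded exact Lagrangian. The real work is twofold: first, to build the local model so that it is genuinely Lagrangian, exact, and matches the adjacent cylinders to first order along the gluing locus; and second, to choose its primitive so that, on each side of the box, it agrees with the constant prescribed by the neighboring cylindrical part of $L$ --- this is precisely what makes condition (3) hold globally (it is the delicate point, and the reason the condition is subtle for links, since the saddle may merge or split components) and is the step carried out in \cite{bst:construct,rizell:surgery,ehk:leg-knot-lagr-cob}. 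Putting the three constructions together proves the theorem.
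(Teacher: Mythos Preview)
The paper does not supply its own proof of this theorem: it is stated as a result quoted from \cite{bst:construct, rizell:surgery, ehk:leg-knot-lagr-cob}, so there is nothing in the paper to compare your proposal against. Your outline is a faithful sketch of the arguments in those references --- the trace of a Legendrian isotopy for the isotopy move, the standard Lagrangian disk bounded by the maximal-$\tb$ unknot for the $0$-handle, and the local saddle model for the $1$-handle --- and you correctly identify the $1$-handle as the place where the genuine analytic work lies and appropriately defer to the cited sources for it.
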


\begin{figure}
\labellist
	\large
	\pinlabel $\emptyset$ [b] at 32 25
\endlabellist

    \centering
    \includegraphics[height=2in]{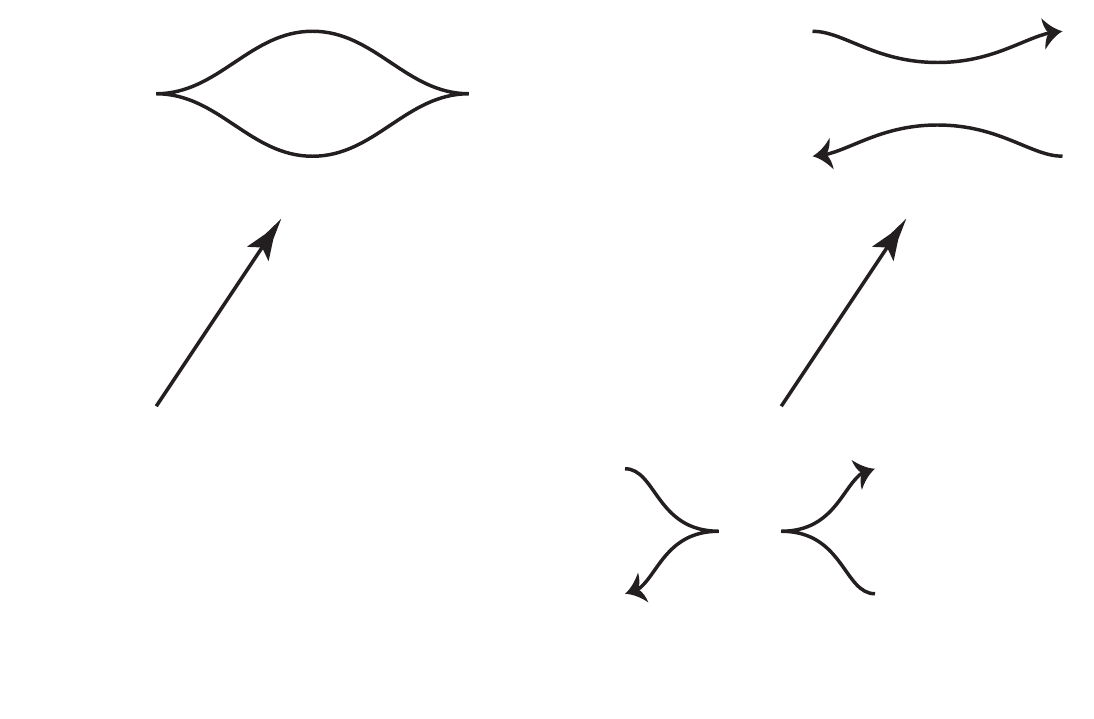}
    \caption{Diagrammatic moves corresponding to the attachment of a $0$-handle (left) and a $1$-handle (right).}
    \label{fig:construct}
\end{figure}

In particular, any decomposable cobordism can be completely described by a finite list of moves which, when applied to the front diagram $\Lambda_-$, results in the front diagram $\Lambda_+$. 

We note that the genus of a decomposable Lagrangian cobordism $L$ is given by the following: 
\begin{equation} \label{eq:decomp-genus}
g(L) = \frac{1}{2}\big((\text{\# of 1-handles}) - (\text{\# of 0-handles})\big)
\end{equation}

For a decomposable concordance, the following lemma describes how the topological condition that a concordance is diffeomorphic to a cylinder is reflected in the combinatorics of the sequence of moves used to construct the concordance.

\begin{lemma}\label{1hcomponent} 
For a decomposable concordance $L$ between Legendrian knots, any 1-handle in $L$ that takes $\Lambda$ to $\Lambda'$ must join strands of two different components in $\Lambda$ and result in one component in $\Lambda'$.
\end{lemma}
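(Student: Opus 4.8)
The plan is to track the number of connected components through the sequence of elementary cobordism moves defining $L$ and use an Euler characteristic bookkeeping argument. Since $L$ is a concordance between knots, $L$ is diffeomorphic to a cylinder, so $\chi(L) = 0$; by the genus formula \eqref{eq:decomp-genus} this forces the number of $1$-handles to equal the number of $0$-handles. Moreover, the negative and positive ends are each a single knot. The key observation is that a $1$-handle attachment, read from $\Lambda$ to $\Lambda'$, either merges two distinct components into one (decreasing the component count by $1$) or splits one component into two (increasing the component count by $1$); a $0$-handle always increases the count by $1$; and an isotopy leaves it unchanged. So if we let $c_i$ denote the number of components after the $i$-th move, the sequence starts and ends at $1$.

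First I would set up this combinatorial framework carefully, assigning to each move its effect on the component count, and then argue by a minimality/contradiction approach. Suppose for contradiction that some $1$-handle $h$ in $L$ takes $\Lambda$ to $\Lambda'$ by either (i) joining two strands of the \emph{same} component, or (ii) failing to produce a connected result — but since we are between knots at the two ends, the real content is case (i): a ``self-merging'' $1$-handle that splits a component in two. The idea is that such a $1$-handle creates a new component that must eventually be absorbed, and this forces an imbalance. Concretely, consider the ``life'' of each component: a component is born either at the negative end or at a $0$-handle or at a splitting $1$-handle, and dies either at the positive end or at a merging $1$-handle. Counting births and deaths and using that $\#(\text{0-handles}) = \#(\text{1-handles})$ along with the fact that exactly one component is present at each end yields the constraint that every $0$-handle is paired with a merging $1$-handle and there are \emph{no} splitting $1$-handles. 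Hence every $1$-handle is of merging type, i.e. joins two different components.

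The second half of the statement — that the result $\Lambda'$ has one fewer component, i.e., the merge genuinely connects — is then immediate from the definition of a merging $1$-handle. The main obstacle I anticipate is making the birth/death pairing argument rigorous: one must rule out configurations where a splitting $1$-handle is later ``cancelled'' by a merging $1$-handle between the two pieces it created, which locally looks balanced but would violate the cylinder condition on $L$ because it would create a genus-$1$ summand or a disconnected intermediate stage inconsistent with connectivity of the total surface. The cleanest way to handle this is probably to observe that $L$ is connected (being a cylinder) and to note that the dual graph whose vertices are the components appearing at all stages and whose edges record $1$-handles must be a tree with exactly the right number of leaves; a splitting $1$-handle would introduce a cycle or an extra leaf, contradicting $\chi(L) = 0$ together with connectivity. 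I would phrase the final argument in terms of this graph-theoretic picture, as it makes both conclusions of the lemma transparent simultaneously.
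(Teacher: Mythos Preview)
Your proposal is correct and follows essentially the same approach as the paper: track component counts through the sequence of moves, use $\chi(L)=0$ to equate the number of $0$- and $1$-handles, and conclude that every $1$-handle must decrease the component count by one. The paper's argument is more direct than your birth/death or dual-graph framework---once you note that the $n$ $0$-handles contribute $+n$ to the component count, the net change is zero, and each $1$-handle changes the count by $\pm 1$, it follows immediately that all $n$ $1$-handles must be of merging type; no further machinery is needed, and the ``obstacle'' you anticipate does not arise.
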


\begin{proof} 
Suppose $\Lambda_- \prec_L \Lambda_+$ is a decomposable concordance between knots. Applying Equation~(\ref{eq:decomp-genus}), we know that the total number of 1-handles is equal to the total number of 0-handles in $L$. Observe that any 0-handle increases the number of components by one. Since $\Lambda_+$ also has exactly one component and isotopy will never change the number of components, it must be the case that each 1-handle decreases the number of components by one. The only way this can happen is if each 1-handle joins two distinct components of $\Lambda$.
\end{proof}

\section{Existence of Decomposable Concordances}\label{mainProof}

We now have the language in place to prove Theorem~\ref{mainThrm}, namely that twisted Legendrian satellites between decomposably concordant knots are also decomposably concordant.

Suppose we have a decomposable concordance $\Lambda_- \prec_L \Lambda_+$. We will build the concordance $L_\Sigma$ from $\lsat{\Lambda_-}{\Pi + \tw_m}$ to $\lsat{\Lambda_+}{\Pi + \tw_m}$ inductively  by enumerating the moves used to produce $L$ and constructing corresponding sequences of moves for $L_\Sigma$. In particular, we claim that given an elementary cobordism $L'$ from the $k$-component link $\Lambda$ to the $k'$-component link $\Lambda'$, we can construct a corresponding sequence of moves on $\lsat {\Lambda} {\mathbf{\Psi}}$ that result in the link $\lsat {\Lambda'} {\mathbf{\Psi'}}$, where $\Psi_1 = \Psi'_1= \Pi+\tw_n$ and $\Psi_i = \Psi_i' = \tw_n$ for all $i > 1$. 

The proof proceeds by examining each type of elementary cobordism in turn.  If the original elementary cobordism $L'$ comes from a Legendrian isotopy, then the inductive claim follows directly from Lemma~\ref{satLiftIso}.  If $L'$ comes from attaching a $0$-handle, then, by definition, $\Lambda'$ is exactly the disjoint sum of $\Lambda$ with the unknot $\Upsilon$. Notice that this new component cannot be the first component of $\Lambda'$, and hence in the satellite, it will simply appear as a disjoint copy of $\lsat{\Upsilon}{\tw_n}$.   Thus, we need only construct a sequence of elementary cobordism moves which takes $\emptyset$ to $\lsat{\Upsilon}{\tw_n}$, which is simple to do as $\lsat{\Upsilon}{\tw_n}$ is isotopic to the disjoint union of $n$ copies of $\Upsilon$.

The interesting part of the proof treats the case of adding a $1$-handle to $\Lambda$.  By Lemma~\ref{1hcomponent}, we know that the $1$-handle must be between strands of two different components of $\Lambda$, which join to form a single component of $\Lambda'$. The heart of the proof is encapsulated by the following lemma:

\begin{lemma} \label{lem:1-handle-sat}
Let $\Lambda$ be a two-component Legendrian link and let $\Lambda'$ be the Legendrian knot that results from attaching a $1$-handle that joins the two components. Suppose that the $1$-handle attachment takes place in a neighborhood $\,V$ in the front diagram.  There exists a sequence of elementary cobordism moves that takes $\lsat{\Lambda}{(\tw_n,\tw_n)}$ to $\lsat{\Lambda'}{\tw_n}$, with all of the moves occurring in $V$.
\end{lemma}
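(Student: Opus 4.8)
The plan is to reduce to an explicit local picture inside $V$ and then to peel it apart with a sequence of $n$ one-handle moves interleaved with Legendrian isotopies. \emph{Local pictures.} Restricted to $V$, the front of $\Lambda$ is the ``pre--one-handle'' picture of Theorem~\ref{thm:construct}: two Legendrian arcs of opposite orientation, each turning back, with the two turnbacks facing each other across the middle of $V$. Satelliting an arc-with-turnback by $\tw_n$ replaces the turnback by a ``fan'' of $n$ nested cusps, so --- after using Lemma~\ref{satLiftIso} to position the two full-twist tangles inside $V$, adjacent to the cusps --- the front of $\lsat{\Lambda}{(\tw_n,\tw_n)}$ inside $V$ is a pair of such fans facing each other. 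Likewise the front of $\Lambda'$ inside $V$ is two through-strands, so the front of $\lsat{\Lambda'}{\tw_n}$ inside $V$ is $2n$ strands traversing $V$, carrying one full-twist tangle that we again place inside $V$. Outside $V$ the two satellites already coincide as diagrams (they differ only in how the arcs connect up inside $V$), so it suffices to transform one restriction-to-$V$ into the other by moves supported there.

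\emph{The move sequence.} The only elementary cobordism move used besides isotopy is the ``inverse pinch,'' i.e.\ the one-handle move of Theorem~\ref{thm:construct} that opens a right cusp and a left cusp facing each other into a pair of through-strands. I would apply it to the innermost cusp of one fan and the innermost cusp of the other --- these are oriented compatibly, since the original one-handle on $\Lambda$ was --- producing a new pair of strands across the middle of $V$; I then Legendrian-isotope that pair out of the way, routing it around the outside of both fans, so that the next pair of innermost cusps becomes adjacent, and I repeat. After $n$ rounds all cusps have been resolved and only $2n$ strands traversing $V$ remain. This uses exactly $n$ one-handles and no zero-handles, matching the count demanded by the genus bookkeeping in the main argument (each zero-handle of $L$ is separately replaced by $n$ zero-handles).

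\emph{Identifying the result --- the crux.} It remains to check that the diagram just produced is Legendrian isotopic, by an isotopy supported in $V$, to $\lsat{\Lambda'}{\tw_n}$; this is where it matters that the pattern is a full twist. Two things must be tracked through the $n$ rounds: (i) which of the $2n$ resulting strands are joined to which --- namely, that they assemble into the $n+n$ parallel copies carried by the two passes of the single merged component of $\Lambda'$ through $V$; and (ii) the braiding that the successive ``slide'' isotopies introduce among the strands. The content of the lemma is that this braiding, combined with the two input full-twist tangles, simplifies so that one of the two full twists is exactly undone, leaving a single $\tw_n$ --- precisely the front of $\lsat{\Lambda'}{\tw_n}$. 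I would prove (i) and (ii) by induction on $n$: when $n=1$ the pattern $\tw_1$ is trivial and the claim is literally the defining one-handle move taking $\Lambda$ to $\Lambda'$; for the inductive step, one round of (inverse pinch on the innermost cusps, then slide) strips a layer from each fan and --- the delicate point --- cancels one layer of the redundant twist against the reflection produced when the new strands are dragged around the turnback, reducing the configuration to the one handled by the case $n-1$. Carrying out this cancellation explicitly, i.e.\ verifying that the slide contributes a full twist of exactly the opposite handedness and nothing more, is the one genuinely nontrivial computation, and it is the main obstacle.
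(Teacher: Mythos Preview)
Your proposal is correct and follows essentially the same approach as the paper: induction on $n$, with each inductive step consisting of isotopies that peel off one strand together with a single $1$-handle, for a total of $n$ one-handles. The only cosmetic difference is ordering---the paper first performs the Reidemeister moves (R3's then R2's) to expose the $\tw_{n-1}$ sub-picture, applies the induction hypothesis, and then does the final $1$-handle, whereas you do the innermost $1$-handle first and then slide; the ``cancellation'' you flag as the main obstacle is exactly what the paper's explicit sequence of R3 and R2 moves (displayed in two figures) verifies.
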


\begin{proof}
We will prove the lemma by induction on $n$, the number of strands in our patterns. The base case ($n = 1$) is trivial, as we simply copy the single 1-handle move which acts on $\Lambda_-$. 

To prove the inductive case, suppose there exists a sequence of elementary cobordisms which takes $\lsat{\Lambda}{(\tw_{n-1}, \tw_{n-1})}$ to $\lsat{\Lambda'}{\tw_{n-1}}$ with all moves taking place in a neighborhood $V_{n-1}$. Consider the Legendrian link $\lsat{\Lambda}{(\tw_n, \tw_n)}$. We first ensure that the full twists of both components are placed in a neighborhood $V_n$ that contains $V_{n-1}$. We begin with the sequence of moves in Figure~\ref{fig:1-h-moves1}.

\begin{figure}
\begin{align*}
&\begin{gathered}
    \includegraphics[scale=0.4]{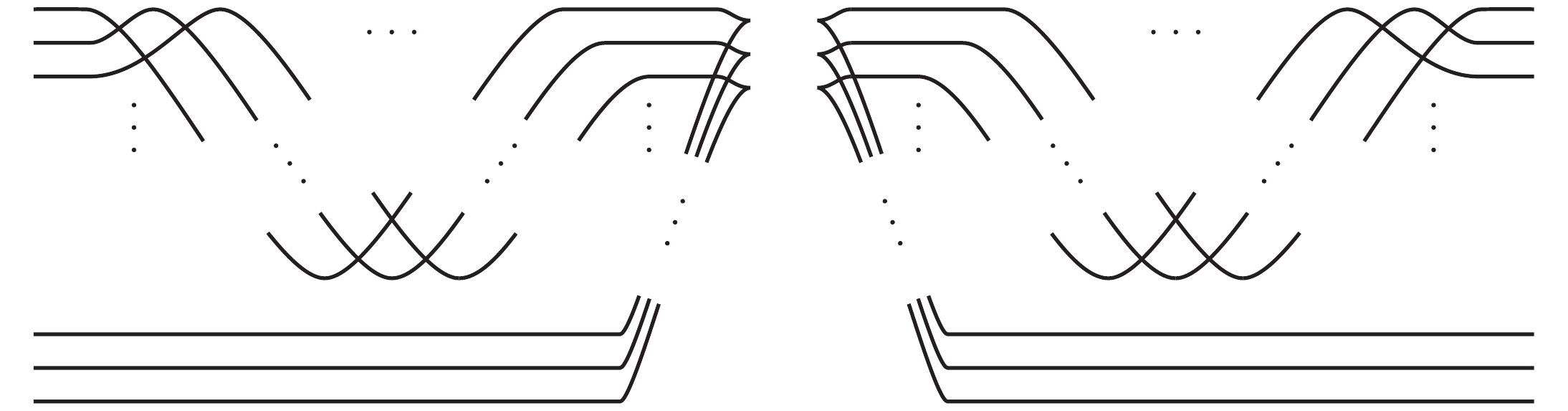}
\end{gathered} \\
\xrightarrowt{{\normalsize \enspace
    R3 $\cross \big(n(n-1)\big)$
\enspace}}{}\enspace
&\begin{gathered}
    \includegraphics[scale=0.4]{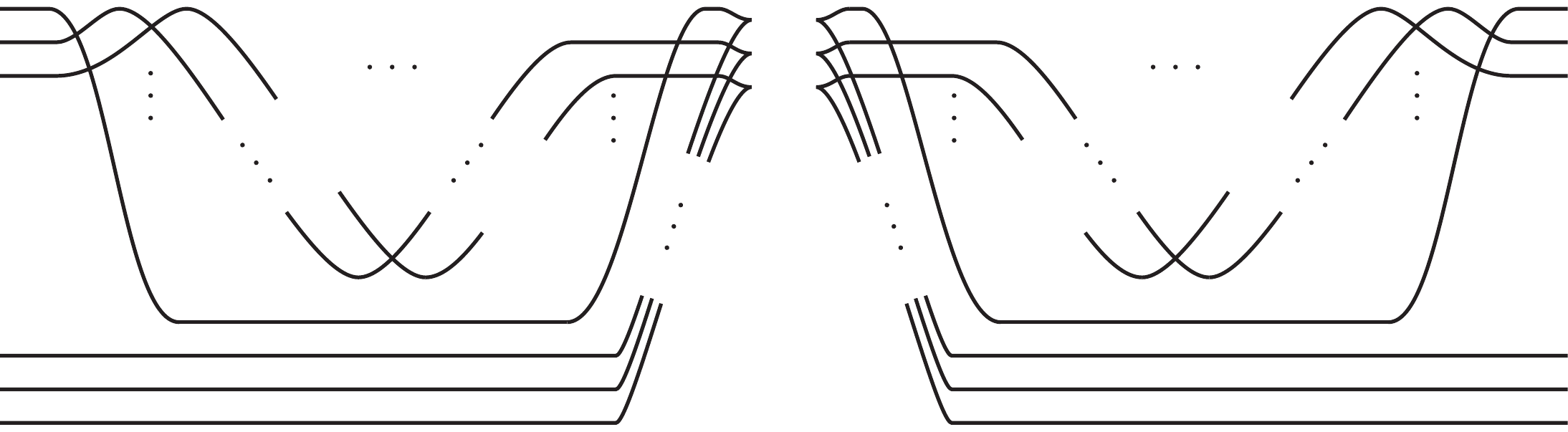}
\end{gathered} \\
\xrightarrowt{{\normalsize \enspace
    R2 $\cross \big(2n\big)$
\enspace}}{}\enspace
&\begin{gathered}
    \includegraphics[scale=0.4]{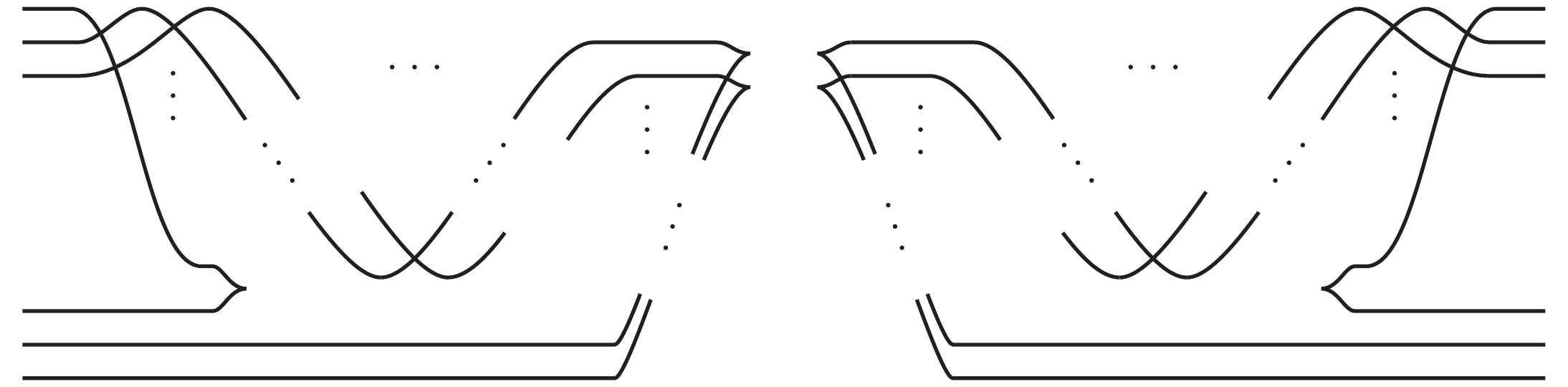}
\end{gathered}
\end{align*}
\caption{The first set of elementary cobordism moves on $\lsat{\Lambda}{(\tw_n, \tw_n)}$.}
\label{fig:1-h-moves1}
\end{figure}

Notice that at the end of this sequence of moves, in a sufficiently small neighborhood, we have exactly the diagram that occurs in a neighborhood $V_{n-1}$ for  $\lsat{\Lambda}{(\tw_{n-1},\tw_{n-1})}$. By the induction hypothesis, we can perform a series of elementary cobordism moves to take this part of the diagram to the corresponding one in $\lsat{\Lambda'}{\tw_{n-1}}$ inside $V_{n-1}$.  Finally, as shown in Figure~\ref{fig:1-h-moves2}, there is a sequence of isotopies that results in $\lsat{\Lambda'}{\tw_n}$.

\begin{figure}
\begin{align*}
\xrightarrowt{{\normalsize \enspace
    Ind. Hypothesis
\enspace}}{}\enspace
&\begin{gathered}
    \includegraphics[scale=0.45]{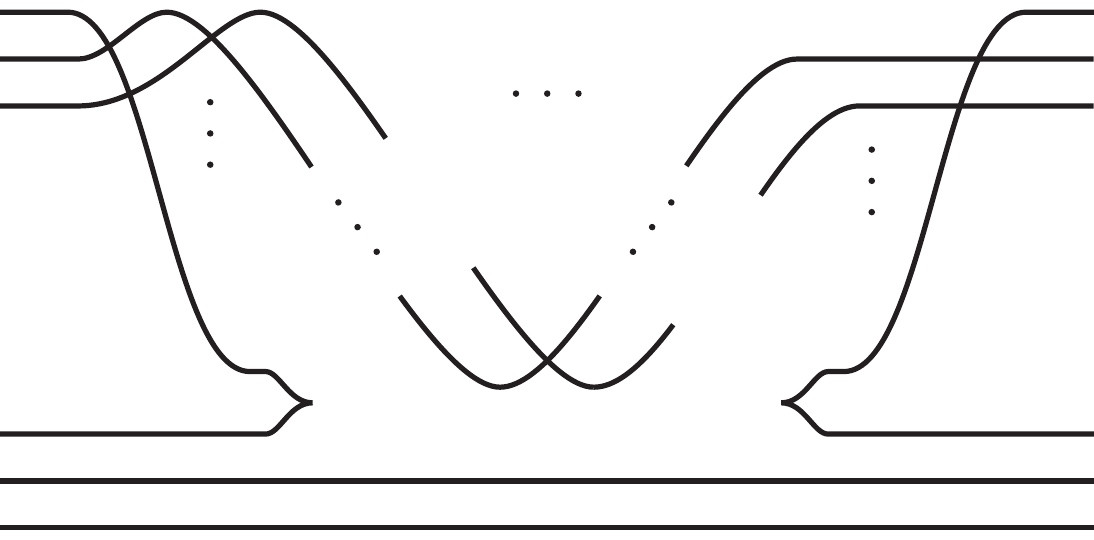}
\end{gathered} \\
\xrightarrowt{{\normalsize \enspace
    1-handle
\enspace}}{}\enspace
&\begin{gathered}
    \includegraphics[scale=0.45]{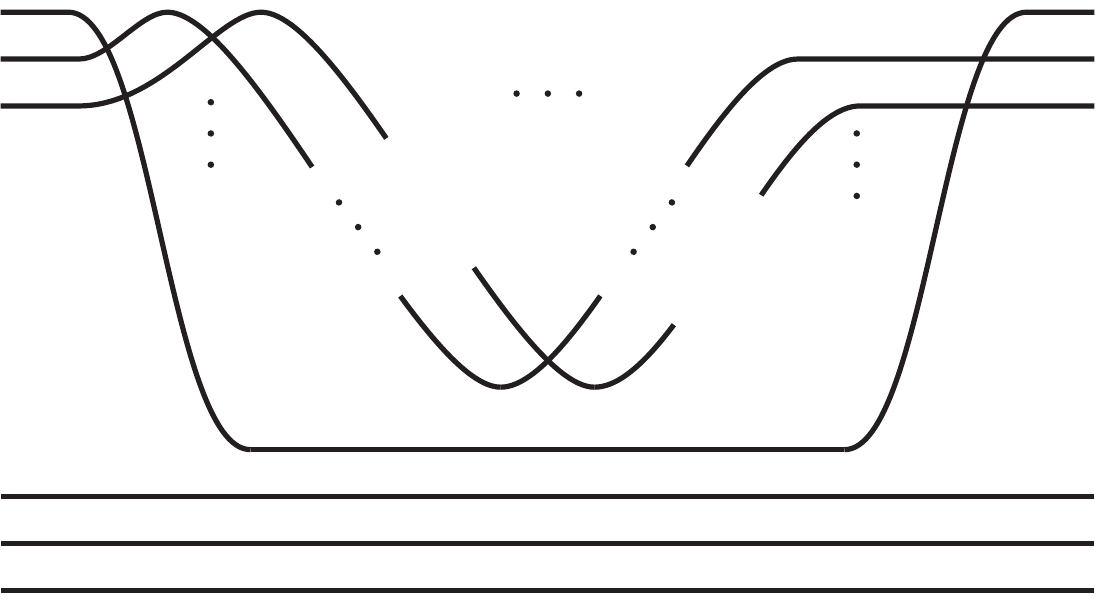}
\end{gathered} \\
\xrightarrowt{{\normalsize \enspace
    R3 $\cross \left(\frac{n(n-1)}{2}\right)$
\enspace}}{}\enspace
&\begin{gathered}
    \includegraphics[scale=0.45]{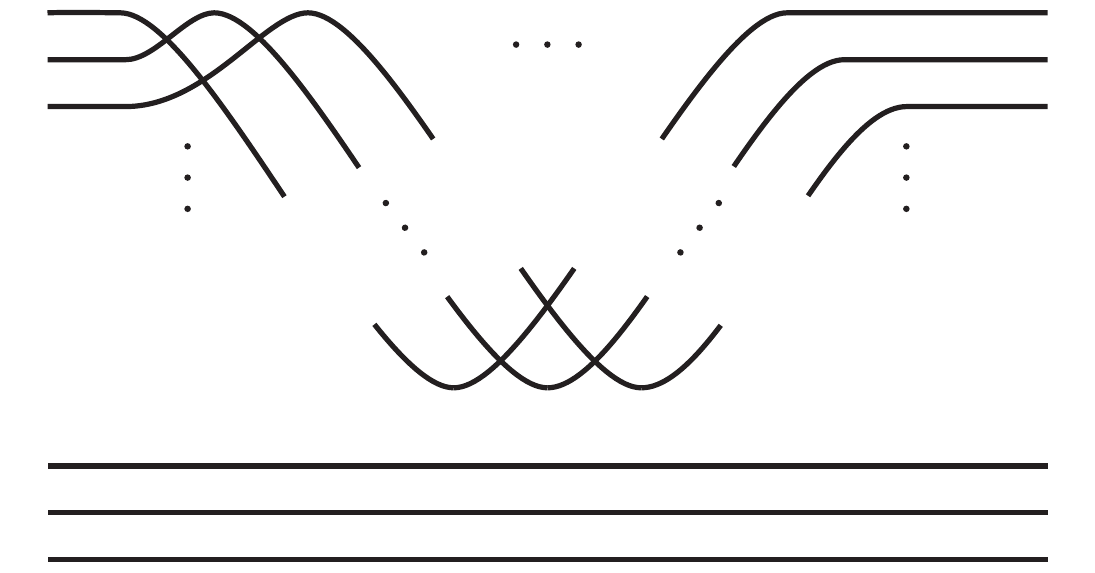}
\end{gathered}
\end{align*}
\caption{The second set of elementary cobordism moves on $\lsat{\Lambda}{(\tw_n, \tw_n)}$.}
\label{fig:1-h-moves2}
\end{figure}

Although we omitted orientations from these diagrams, notice that regardless of the pattern, since the original 1-handle on $\Lambda$ is oriented, the 1-handles in the figures must also be oriented.  Finally, note that in this construction, we used exactly $n$ 1-handles in the cobordism from $\lsat{\Lambda}{(\tw_n,\tw_n)}$ to $\lsat {\Lambda'}{\tw_n}$.
\end{proof}

To see how the lemma is used in the general construction, we may  apply it verbatim if neither component involved in the $1$-handle is the first component of $\Lambda$. If one of the components is, indeed, the first, then we may  ensure that the pattern $\Pi$ is moved far from the neighborhood $V$, and we can see that the lemma still applies. Thus, we obtain a sequence of elementary cobordism moves from $\lsat {\Lambda} {\mathbf{\Psi}}$ to $\lsat {\Lambda'} {\mathbf{\Psi'}}$. It remains to check that the resulting cobordism $L_\Sigma$ is a concordance. As discussed above, for each 1-handle (or 0-handle) in $L$, we have $n$ 1-handles (or 0-handles) in $L_\Sigma$, so indeed $\chi(L_\Sigma) = n\cdot\chi(L) = 0$.  This completes the proof of Theorem~\ref{mainThrm}.

\section{Lagrangian Slice Knots and Satellites}
\label{sec:slice}

We end with the proof of Theorem~\ref{thm:no-new-slice}, which states that if $P$ is a smooth solid torus knot for which $\sat{0}{U}{P}$ is slice and  $K$ is a slice knot with maximal Thurston-Bennequin number strictly less than $-1$, then $\sat{0}{K}{P}$ has no Lagrangian slice representatives. Suppose, for the sake of contradiction, that the smooth solid torus knot $P$ has the property that $\sat{0}{U}{P}$ is slice. Let $\Pi$ be a Legendrian representative of $P$ with strand number $m$.  Finally, to complete the setup, let $\Lambda$ be a Legendrian representative of a slice knot $K$ with $\tb(\Lambda) = \overline{\tb}(K) < -1$ and $\lsat{\Lambda}{\Pi - \tb(\Lambda) \cdot \tw_m}$ Lagrangian slice.  

Since $\lsat{\Lambda}{\Pi - \tb(\Lambda) \cdot \tw_m}$ is Lagrangian slice, we may compute as follows, using the sharpness of the slice-Bennequin inequality as in Equation (\ref{eq:filling-g4}) in the first line:
\begin{align*}
    -1 &= \tb(\lsat{\Lambda}{\Pi - \tb(\Lambda) \cdot \tw_m}) & \\
    &= \delta(\Pi)^2 \tb(\Lambda) + \tb(\Pi - \tb(\Lambda) \cdot \tw_m) && \text{by Lemma~\ref{lem:sat-tb}}\\
    &= \delta(\Pi)^2 \tb(\Lambda) + \tb(\Pi) - \tb(\Lambda) \delta(\Pi)^2 + \tb(\Lambda)m && \text{by Equation~(\ref{eq:tb-tw})} \\
    &= \tb(\Pi)+  \tb(\Lambda)m
\end{align*}

In particular, the upper bound $\tb(\Lambda) \leq -2$ tells us that
\begin{equation} \label{eq:tb-pi-1}
    \tb(\Pi) \geq 2m-1.
\end{equation}

On the other hand, since $\sat{0}{U}{P}$ is slice, the slice-Bennequin inequality (\ref{eq:slice-bennequin}) and computations similar to those above tell us that
\begin{align*}
    -1 &\geq \tb(\lsat{\Upsilon}{\Pi + \tw_m}) &\\
    &= \tb(\Pi) + m
\end{align*}

Thus, we see that 
\begin{equation} \label{eq:tb-pi-2}
    \tb(\Pi) \leq m-1.
\end{equation}

As we must have $m \geq 1$, the inequalities (\ref{eq:tb-pi-1}) and (\ref{eq:tb-pi-2}) yield the desired contradiction.

\bibliographystyle{amsplain}
\bibliography{main}

\end{document}